\newtheorem{lemma}{Lemma}
\newtheorem{proposition}[lemma]{Proposition}
\newtheorem{theorem}[lemma]{Theorem}
\newtheorem{question}[lemma]{Question}
\def\BState{\State\hskip-\ALG@thistlm}
\title[]{On integers which are representable as sums of large squares}
\author{Alessio Moscariello}
\subjclass[2010]{11D07,11P05.}
\keywords{Frobenius number, Four Square Theorem.}
\address[Alessio Moscariello]{Dipartimento di Matematica e Informatica, \ Universit\`a di Catania, \  Viale Andrea Doria 6, 
95125 Catania, Italy.}
\address[Alessio Moscariello]{Scuola Superiore di Catania, \ Universit\`a di Catania, \  Via Valdisavoia 9, 
95125 Catania, Italy.}
\email{alessio.moscariello@studium.unict.it}
\begin{document}
\maketitle
\begin{abstract}
We prove that the greatest positive integer that is not expressible as a linear combination with integer coefficients of elements of the set $\{n^2,(n+1)^2,\ldots \}$ is asymptotically $O(n^2)$, verifying thus a conjecture of Dutch and Rickett. Furthermore we ask a question on the representation of integers as sum of four large squares.

\end{abstract}
\section*{Introduction}
Additive number theory is the branch of arithmetic that studies particular subsets of integers (such as arithmetic progressions, prime numbers or squares) and their behavior under addition. A typical class of problems in this area concerns determining which positive integers can be expressed as sums of elements of a fixed subset of integers. In this aspect, one of the earliest results of additive number theory is Lagrange's Four Square Theorem, stating that every non-negative integer can be expressed as sum of four squares. Another classical problem is the Diophantine Frobenius Problem (cf. \cite{RA}), that asks for the greatest integer that is not representable as a linear combination with non-negative integer coefficients of elements of a subset of integers. In connection to both topics, it is interesting to understand which numbers are representable as sum of large squares.  In this work, we investigate the largest number that cannot be represented as a sum of squares of integers greater than a fixed $n$. In particular, we answer affirmatively a conjecture of Dutch and Rickett (cf. \cite{DR}), stating that the greatest integer that is not expressible as a sum of squares of integers greater than or equal to $n$ grows like $n^2$, namely we prove the following result:

\begin{theorem}\label{DR}
Let $\Gamma_n$ be the set of integers that can be expressed as a finite sum of squares of integers greater than or equal to $n$, and denote by $F(\Gamma_n)$ the greatest integer not belonging to $\Gamma_n$. Then $F(\Gamma_n)=O(n^2)$.
\end{theorem}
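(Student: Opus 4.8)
The plan is to reduce the theorem to the construction of a single block of $n^2$ consecutive integers inside $\Gamma_n$, situated at height $O(n^2)$. Since $n^2 \in \Gamma_n$ and $\Gamma_n$ is closed under addition of $n^2$, once we exhibit an integer $A = O(n^2)$ with $\{A, A+1, \ldots, A + n^2 - 1\} \subseteq \Gamma_n$, adding copies of $n^2$ propagates the block upward to cover every integer $\ge A$; hence $F(\Gamma_n) < A = O(n^2)$. Everything therefore reduces to a covering statement on one window of length $n^2$.

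The next observation is the rigidity forced by size: if $N = O(n^2)$ is a sum of squares each $\ge n$, then the number $k$ of squares used satisfies $k n^2 \le N$, so $k = O(1)$. Representing the elements of our window is thus genuinely a problem about sums of a \emph{bounded} number of large squares, and I would organize the argument by $k$ together with a single dominant square. For a target $N$ in the window I would first peel off a square $x^2$ with $x \ge n$ chosen so that the remainder $N - x^2$ lands in a short, controlled sub-window, and then represent the remainder as a sum of a few squares $\ge n$.

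The engine for exact, one-by-one coverage is the pair of identities $(m+1)^2 + (m-1)^2 = 2m^2 + 2$ and $(m+1)^2 - m^2 = 2m+1$: the first raises a total by exactly $2$ whenever a representation contains two squares of a common base $m \ge n+1$ (split the pair), while the second switches parity at the cost of a single odd increment. Keeping a small fixed reservoir of equal large squares as a vernier, and combining $+2$ splits with occasional single increments and with adjustments of the dominant square, I would produce for each admissible $k$ a long run of consecutive integers; letting the dominant square and $k$ vary, these runs should overlap and sweep out the whole window.

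The hard part is precisely the fine control at the bottom of the range, where few squares are available. For a fixed small $k$ the reachable set genuinely has gaps (for $k=2$ it is the sparse set of sums of two large squares, and for $k=3$ it inherits the $4^a(8b+7)$ obstruction), so no single $k$ suffices; one must show that the union over the $O(1)$ admissible values of $k$, together with the freedom in the dominant square, leaves no integer of the window uncovered. This gap-filling is the crux, and it is exactly what ties the problem to the representability of integers as sums of four large squares. It is also the reason the method yields the qualitative bound $O(n^2)$ rather than a sharp constant, and why the four-large-squares statement is posed separately as a question.
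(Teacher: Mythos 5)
Your reduction to a single window is sound: since $n^2\in\Gamma_n$ and $\Gamma_n$ is closed under addition, exhibiting $A=O(n^2)$ with $\{A,A+1,\ldots,A+n^2-1\}\subseteq\Gamma_n$ would give $F(\Gamma_n)<A$, and the observation that any representation of an $N=O(n^2)$ uses $k=O(1)$ squares is correct. But the proof stops exactly where the theorem lives: the covering of the window is never established, and you say so yourself (``this gap-filling is the crux''). Concretely, the vernier identities cannot do the job as described: a reservoir of $j$ equal squares of base $m\ge n+1$ yields, via $(m+1)^2+(m-1)^2=2m^2+2$, increments $2,4,\ldots,2j$ with $j=O(1)$, so each configuration produces a run of bounded length, while consecutive choices of the dominant square are spaced by $2x+1\gtrsim 2n$. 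Nothing in the proposal shows that the union of these bounded runs, over the $O(1)$ admissible $k$ and over dominant squares, leaves no gaps; and for small $k$ you correctly note that genuine obstructions exist (sparsity for two squares, the $4^a(8b+7)$ classes for three). What remains is therefore an unproved covering statement essentially equivalent in difficulty to the original problem.

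The paper takes a different and shorter route that avoids consecutive-integer blocks entirely: it proves that \emph{every} $n'\ge K^2n^2$ is a sum of at most four squares each $\ge n$. For squarefree $q$ one has $q\not\equiv 0\pmod 4$, so Jacobi's formula gives $r(q)=8\sigma(q)\ge 8(q+1)$, and the equidistribution of integral points on the sphere $\mathcal{S}_q$ (Iwaniec, Theorem 11.5) places, for large $q$, a lattice point in the cap where every coordinate is at least $\sqrt q/8$; the finitely many exceptional squarefree $q$ are absorbed into a constant $K$. A general $n'$ is then written as $\rho^2 q$ with $q$ squarefree, and the representation of $q$ is scaled by $\rho$. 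To salvage your elementary approach you would have to prove the covering claim for the window, which is in substance the ``four large squares'' question the paper deliberately leaves open; as it stands the proposal has a genuine gap at its central step.
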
 
Finally we ask a question related to the representation of an integer as a sum of four large squares. 
\section{Main result}
Denote by $\mathbb{N}$ the set of non-negative integers. Since $n^2$ and $(n+1)^2$ are relatively prime for any $n \in \mathbb{N}$, $n \neq 0$, then Sylvester's formula (cf. \cite{S}) states that every integer greater than or equal to $(n^2-1)[(n^2+1)-1]$ can be written as a non-negative linear combination of $n^2$ and $(n+1)^2$, thus the set $\mathbb{N}\setminus \Gamma_n$ is finite. Recently it has been proved (cf. \cite{LRS}) that the greatest integer that is not representable as a non-negative linear combination of $n^2,(n+1)^2,(n+2)^2$ is of order $O(n^3)$. Concerning our problem, it is known that $F(\Gamma_n)=o(n^{2+\epsilon})$ for any $\epsilon > 0$ (cf. \cite{DR}).  We estimate $F(\Gamma_n)$ by focusing on representations that involve the least possible number of squares. 
The well-known Four Square Theorem, due to Lagrange, states that every positive integer can be written as a sum of four squares.
Furthermore, if we denote by $$r(n) := \#\big\{(a_1,a_2,a_3,a_4) \in \mathbb{Z}^4 \ | \ a_1^2+a_2^2+a_3^2+a_4^2=n \big\}$$ and by $\sigma'(n)$ the sum of divisors of $n$ that are not divisible by $4$, in 1828 Jacobi proved that $r(n)$ is equal to $8\sigma'(n)$ (cf. \cite{J}). Let $\sigma(n)$ stand for the sum of divisors of $n$. Notice that if $n \in \mathbb{N}$ is such that $n \not \equiv 0 \pmod{4}$ then $\sigma'(n)=\sigma(n) > n$, hence for $n \rightarrow +\infty$, $n \not \equiv 0 \pmod{4}$ we get that $r(n) \rightarrow +\infty$. Therefore it makes sense to ask for representations as sum of four relatively large squares. However, the elements of the infinite set $\mathcal{B}=\{1,3,5,9,11,17,29,41\} \cup \{4^\alpha 2,4^\alpha 6, 4^\alpha 14 \ | \ \alpha \in \mathbb{N}\}$ cannot be expressed as sum of four non-zero squares  (see \cite{G}, Chapter 6, Theorem 3). Taking that into account, we pose the following question:

\begin{question}\label{fourlarge}
Is there $K$ in $\mathbb{N}$ such that for every $n$ in $\mathbb{N}$ there exists $a_1, \ldots, a_4$ such that $n = a_1^2 + a_2^2+a_3^2+a_4^2$ and for each $i$ either $a_i=0$ or $a_i \ge \frac{\sqrt{n}}{K}$?
\end{question}
Note that if such $K$ exists then, for every $n \in \mathbb{N}$, every positive integer $M \ge K^2n^2$ is a sum of four squares contained in the set $\{0,n^2,(n+1)^2,\ldots\}$: hence $M \in \Gamma_n$,  $F(\Gamma_n) \le K^2n^2$ and $F(\Gamma_n)=O(n^2)$. It is easy to see that such a $K$ cannot be less than $8$ (by taking $N=55$), and computational evidence suggests that $K=8$ affirmatively answers Question \ref{fourlarge}. We are going to give a partial answer to this question by proving that such a $K$ must exist, in order to prove our main theorem.
Consider now the $4$-dimensional sphere of radius $\sqrt{n}$ 
$$\mathcal{S}_n:=\{(x_1,x_2,x_3,x_4) \in \mathbb{R}^4 \ | \ x_1^2+x_2^2+x_3^2+x_4^2=n\}.$$
The representations of $n$ as sum of four integer squares correspond to the integral points of $\mathcal{S}_n$. Question \ref{fourlarge} is thus related to the distribution of these integral points over the sphere $\mathcal{S}_n$. This problem has been widely studied in literature in the more general context of quadratic forms: it has been proved (cf. \cite{I}, Theorem 11.5) that for an arbitrary quadratic form $Q$ if $r \ge 4$ then the vectors $a \in \mathbb{Z}^r$ with $Q(a)=n$ are asymptotically equidistributed as $n \rightarrow \infty$ over integers satisfying the congruence $Q(a) \equiv n \pmod{128(det(Q))^3}$. We will now explain the meaning of the words ``asymptotically equidistributed". First, points on $\mathcal{S}_n$ scale concentrically to points of the unit sphere $\mathcal{S}_1$. Thus we can study the distribution of integral points over $\mathcal{S}_n$ by considering the distribution of the associated scaled points of $\mathcal{S}_1$. For this purpose we define, for a domain $F \subseteq \mathcal{S}_1$, $$r_F(n) := \#\left\{(a_1,a_2,a_3,a_4) \in \mathbb{Z}^4 \ \big| \ (a_1,a_2,a_3,a_4) \in \mathcal{S}_n, \left( \frac{a_1}{\sqrt{n}},\frac{a_2}{\sqrt{n}},\frac{a_3}{\sqrt{n}},\frac{a_4}{\sqrt{n}}\right)  \in F\right\},$$ that is the number of integral points of $\mathcal{S}_n$ that scale to points of $F$. With this notation,  by ``asymptotically equidistributed" we mean that if $\mathcal{F}$ is a convex domain with smooth boundaries on $\mathcal{S}_1$, then $r_\mathcal{F}(n) \sim |\mathcal{F}|r(n)$, where $|\mathcal{F}|$ denotes the normalized surface area of $\mathcal{F}$ (assuming $|\mathcal{S}_1|=1$). Notice that, if we consider the quadratic form $Q(x)=x_1^2+x_2^2+x_3^2+x_4^2$, then  by Lagrange's Four Square Theorem for every integer $n$ there exists $a=(a_1,a_2,a_3,a_4) \in \mathbb{Z}^4$ such that $Q(a)=n$, thus the congruence $Q(x) \equiv n \pmod{128}$ admits solutions for every $n \in \mathbb{N}$; this consideration allows us to discard this constraint in the following.

This lemma partially answers Question \ref{fourlarge}:
\begin{lemma}\label{point}
Let $n$ be a squarefree integer. Then, for $n$ sufficiently large, there is an integral point $(a_1,a_2,a_3,a_4) \in \mathcal{S}_n$ such that $a_i \ge \frac{\sqrt{n}}{8}$. 
\end{lemma}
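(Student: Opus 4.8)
The plan is to use the equidistribution result (cf. \cite{I}, Theorem 11.5) as a black box. The strategy rests on the observation that the ``bad'' region of the sphere $\mathcal{S}_1$ — the set of scaled points where at least one coordinate is small — has surface area that can be made strictly less than the whole sphere by choosing the threshold $\frac{1}{8}$ appropriately. Since equidistribution tells us that the number of integral points scaling into a fixed convex region $\mathcal{F}$ with smooth boundary is asymptotic to $|\mathcal{F}|\,r(n)$, the number of integral points avoiding every bad coordinate strip is asymptotic to a \emph{positive} fraction of $r(n)$. As $n$ is squarefree (so $n \not\equiv 0 \pmod 4$ and $r(n) = 8\sigma'(n) = 8\sigma(n) \to \infty$), this positive fraction of an unbounded quantity is eventually at least $1$, giving the desired point.

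First I would define, for each index $i$, the bad strip $\mathcal{C}_i := \{x \in \mathcal{S}_1 : |x_i| < \tfrac{1}{8}\}$ and let $\mathcal{C} = \bigcup_{i=1}^4 \mathcal{C}_i$ be the full bad set; the good set is its complement $\mathcal{G} = \mathcal{S}_1 \setminus \mathcal{C}$. By symmetry and a subadditivity bound, $|\mathcal{C}| \le 4\,|\mathcal{C}_1|$, where $|\mathcal{C}_1|$ is the normalized area of the band $|x_1| < \tfrac{1}{8}$ on the unit $3$-sphere. The second step is to compute (or crudely bound) $|\mathcal{C}_1|$: the normalized measure of $\{|x_1| < t\}$ on $\mathcal{S}_1 \subset \mathbb{R}^4$ is proportional to $\int_{-t}^{t}(1-u^2)\,du$ after integrating out the remaining three coordinates (the induced measure on the $x_1$-marginal of the uniform distribution on the $3$-sphere has density proportional to $(1-x_1^2)^{1}$). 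Evaluating this at $t = \tfrac{1}{8}$ yields $|\mathcal{C}_1| = \tfrac{3}{2}\big(t - \tfrac{t^3}{3}\big) = \tfrac{3}{2}\big(\tfrac{1}{8} - \tfrac{1}{3\cdot 512}\big)$, which is comfortably below $\tfrac14$, so that $|\mathcal{C}| < 1$ and hence $|\mathcal{G}| > 0$.

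The third step is to apply equidistribution to $\mathcal{G}$. Strictly, the cited theorem is stated for convex domains with smooth boundary, and $\mathcal{G}$ is neither convex nor smooth at its edges; I would therefore not apply it to $\mathcal{G}$ directly but instead to each $\mathcal{C}_i$ (or to a smooth convex region inside $\mathcal{G}$). Concretely, I would show that $r_{\mathcal{C}_i}(n) \sim |\mathcal{C}_i|\,r(n)$ for each $i$, so that the number of bad integral points is at most $\sum_i r_{\mathcal{C}_i}(n) \sim |\mathcal{C}|\,r(n)$, while the total number of integral points is exactly $r(n)$. Subtracting, the number of good integral points is asymptotically at least $(1 - |\mathcal{C}|)\,r(n)$, a positive constant times $r(n)$. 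Since $r(n) \to \infty$ along squarefree $n$, this lower bound exceeds $1$ for all sufficiently large $n$, producing an integral point $(a_1,a_2,a_3,a_4) \in \mathcal{S}_n$ with every scaled coordinate at least $\tfrac{1}{8}$ in absolute value, i.e. $|a_i| \ge \tfrac{\sqrt{n}}{8}$; replacing each $a_i$ by $|a_i|$ gives the nonnegative point in the statement.

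The main obstacle I anticipate is the regularity mismatch between the regions I want to measure and the hypotheses of the equidistribution theorem: the bands $\mathcal{C}_i$ have corners where they meet, and the strip $\{|x_1| < t\}$ is itself not convex (it is a neighborhood of a great sub-sphere). The cleanest fix is to bound each $\mathcal{C}_i$ from outside by finitely many genuinely convex smooth caps whose total area is still less than $\tfrac14$, or to invoke a version of the equidistribution estimate with explicit error terms (the error in \cite{I} is a negative power of $n$ times $r(n)$) that applies to the slightly more general Jordan-measurable sets with boundary of measure zero. Handling this approximation carefully — ensuring the error terms from the four bands sum to $o(r(n))$ and do not swamp the positive main term $(1 - |\mathcal{C}|)\,r(n)$ — is the one technical point that requires genuine care; everything else is a direct area computation followed by the divergence $r(n) \to \infty$.
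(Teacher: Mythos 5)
Your proposal is correct in substance and rests on exactly the same two pillars as the paper's proof: the equidistribution theorem from \cite{I} and the divergence of $r(n)=8\sigma(n)\ge 8(n+1)$ along squarefree $n$ (squarefree forces $n\not\equiv 0\pmod 4$). The difference is the decomposition: the paper applies equidistribution once, directly to the good region $\mathcal{F}=\{x\in\mathcal{S}_1 : x_i\ge \tfrac18 \ \text{for all} \ i\}$, and needs only the qualitative fact $|\mathcal{F}|>0$ (obvious, since $\mathcal{F}$ contains a neighbourhood of $(\tfrac12,\tfrac12,\tfrac12,\tfrac12)$); you instead pass to the complement, bound the bad points by a union of four bands $\{|x_i|<\tfrac18\}$, and must therefore carry out a quantitative area estimate. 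That estimate works, but note that the marginal density of one coordinate on $\mathcal{S}_1\subset\mathbb{R}^4$ is proportional to $(1-u^2)^{1/2}$, not $(1-u^2)$: the slice at $x_1=u$ is a $2$-sphere of radius $\sqrt{1-u^2}$, contributing a factor $(1-u^2)$, times the arc-length factor $(1-u^2)^{-1/2}$. The corrected bound $|\mathcal{C}_1|\le \tfrac{2}{\pi}\cdot\tfrac{2}{8}<\tfrac14$ still gives $|\mathcal{C}|<1$, so nothing breaks. Your choice of decomposition also creates the regularity problem you worry about, since the bands are neighbourhoods of great subspheres and hence not convex, whereas the paper's $\mathcal{F}$ is an intersection of spherical caps and therefore geodesically convex (its boundary still has corners, a point the paper silently glosses over and which your approximation remarks would equally repair). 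Net effect: your route is a correct but slightly longer path to the same conclusion, and your explicit attention to the hypotheses of the equidistribution theorem is the one place where you are more careful than the published argument.
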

\begin{proof}
Notice first that for $n$ squarefree we have $n \not \equiv 0 \pmod{4}$, hence $\sigma(n)=\sigma'(n)$ and $r(n) = 8\sigma(n) \ge 8(n+1)$.
 Considering the domain $$\mathcal{F}:=\left\{ (x_1,x_2,x_3,x_4) \in \mathcal{S}_n \ \bigg|  \ x_i \ge \frac{1}{8} \ \ \text{for every} \ i=1,2,3,4 \right\} \subseteq \mathcal{S}_1,$$ we have $|F| > 0$.  Thus $r_{\mathcal{F}}(n) \sim |\mathcal{F}| r(n) \ge |\mathcal{F}|8(n+1)$, therefore on squarefree integers the sequence $r_{\mathcal{F}}(n)$ is asymptotic to a sequence that is larger than $|\mathcal{F}|8n$, so the values of $r_{\mathcal{F}}(n)$ must be strictly positive for sufficiently large squarefree $n$, and this implies our thesis.
\end{proof}
A consequence of Lemma \ref{point} is:
\begin{lemma}\label{partial}
There exists a positive constant $K$ such that for every squarefree positive integer $n$ there is an integral point $(a_1,a_2,a_3,a_4) \in \mathcal{S}_n$ such that if $a_i \neq 0$ then $a_i \ge \frac{\sqrt{n}}{K}$.
\end{lemma}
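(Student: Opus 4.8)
The plan is to derive this uniform statement from Lemma \ref{point} by isolating a finite set of exceptional small values and absorbing them into the constant. The key observation is that Lemma \ref{point} already provides the desired property with the \emph{explicit} constant $8$, but only for squarefree $n$ that are \emph{sufficiently large}; so what remains is purely to handle the finitely many squarefree integers below that threshold, where a cruder bound suffices.

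First I would make the qualifier in Lemma \ref{point} precise: there exists a threshold $N_0 \in \mathbb{N}$ such that for every squarefree integer $n \ge N_0$ there is an integral point $(a_1,a_2,a_3,a_4) \in \mathcal{S}_n$ with $a_i \ge \frac{\sqrt{n}}{8}$ for all $i$. In particular all four coordinates are nonzero and satisfy the required inequality with $K=8$, so the conclusion of Lemma \ref{partial} holds for every squarefree $n \ge N_0$ using the constant $8$.

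Next I would treat the finitely many squarefree integers $n$ in the range $1 \le n < N_0$. For each such $n$, Lagrange's Four Square Theorem furnishes \emph{some} representation $n = a_1^2 + a_2^2 + a_3^2 + a_4^2$ with $(a_1,a_2,a_3,a_4) \in \mathbb{Z}^4$, hence an integral point of $\mathcal{S}_n$. For this fixed representation every nonzero coordinate satisfies $a_i \ge 1$, so the inequality $a_i \ge \frac{\sqrt{n}}{K}$ holds as soon as $K \ge \sqrt{n}$. Defining $K_n := \sqrt{n}$ (or, sharpening, $K_n := \sqrt{n}/\min\{a_i : a_i \ne 0\}$) therefore makes the desired property hold for this single value of $n$.

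Finally I would set $K := \max\bigl(8,\, \max_{1 \le n < N_0} K_n\bigr)$, where the inner maximum is taken over the finitely many squarefree $n < N_0$ and is thus a finite real number; enlarging $K$ only weakens the inequality $a_i \ge \frac{\sqrt{n}}{K}$, so this single constant works simultaneously for the large squarefree $n$ (via Lemma \ref{point}) and for each small one. I do not expect any genuine obstacle here: the entire content is carried by Lemma \ref{point}, and the only subtlety is the bookkeeping that converts its asymptotic ``sufficiently large'' into a uniform constant by separately disposing of a finite list of cases.
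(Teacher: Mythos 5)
Your proposal is correct and follows essentially the same route as the paper: invoke Lemma \ref{point} to cover all sufficiently large squarefree $n$ with the constant $8$, then use Lagrange's Four Square Theorem on the finitely many remaining squarefree integers and take a maximum of the resulting finite list of constants (your sharpened choice $K_n = \sqrt{n}/\min\{a_i : a_i \neq 0\}$ is exactly the paper's $K_i$). No gaps.
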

\begin{proof}
By Lemma \ref{point} there exists $N \in \mathbb{N}$ such that for every squarefree positive integer $n \ge N$ there is an integral point $(a_1,a_2,a_3,a_4) \in \mathcal{S}_n$ with $a_i \ge \frac{\sqrt{n}}{8}$. Therefore there is at most a finite set $\{n_1,\ldots,n_t\}$ of squarefree positive integers for which that property should not hold. However, by Lagrange's Four Square Theorem there should be an integral point $(a_{i1},a_{i2},a_{i3},a_{i4})$ on the sphere
$\mathcal{S}_{n_i}$ for every $i=1,\ldots,t$. Define now for every $i=1,\ldots,t$ the constant $$K_i := \max \left\{\frac{\sqrt{n_i}}{a_{ij}} \ \bigg|  a_{ij} \neq 0 \ , \ j=1,\ldots,4\right\}.$$
Let now $K = \max \{K_1,\ldots,K_t,8\}+1$. Then for every squarefree positive integer $n$ we have that there is an integral point $(a_1,a_2,a_3,a_4) \in \mathcal{S}_n$ such that whenever $a_i \neq 0$ we have $a_i \ge \frac{\sqrt{n}}{\max\{K_1,\ldots,K_t,8\}} > \frac{\sqrt{n}}{K}$, thus concluding our proof.
\end{proof}
Now we are ready to prove the main theorem.
\begin{proof}[Proof of Theorem 1]
Let $n \in \mathbb{N}$, let $K$ be as in Lemma \ref{partial} and take $n' \ge K^2n^2$. Suppose that $n'= \rho^2 q$ where $\rho,q \in \mathbb{Z}^+$ and $q$ is squarefree. By Lemma \ref{partial} there is an integral point $(a_1,a_2,a_3,a_4)$ on the sphere $\mathcal{S}_q$ such that if $a_i \neq 0$  we must have $a_i \ge \frac{\sqrt{q}}{K}$. Since $a_1^2+a_2^2+a_3^2+a_4^2=q$ we have $(\rho a_1)^2 + (\rho a_2)^2 + (\rho a_3)^2 + (\rho a_4)^2 = n'$, and $\rho a_i \ge \frac{\rho \sqrt{q}}{C} = \frac{\sqrt{n'}}{C} \ge n$ whenever $\rho a_i \neq 0$. Thus $n' \in \Gamma_n$. This implies $F(\Gamma_n) < K^2 n^2$, hence $F(\Gamma_n)=O(n^2)$.
\end{proof}
\section{Asymptotics and computations}
Theorem \ref{DR} states the existence of a bound for $F(\Gamma_n)$: however, in the proof it is not explicited how large the \emph{sufficiently large} $n$ should be to satisfy Lemma \ref{point}, and thus we cannot define explicitly the constant $K$ of Lemma \ref{partial}, although the existence of such $K$ is sufficient for proving our main result. Here, we provide computational evidence that Question \ref{fourlarge} might be satisfied by considering $K=8$.
\begin{proposition}\label{even}
If there is an even positive integer $n$ such that $\tilde{K}$ is the minimal value of $K$ such that Question \ref{fourlarge} holds, then there is an infinite set of positive integers such that $\tilde{K}$ is the minimal value of $K$ such that Question \ref{fourlarge} holds. 
\end{proposition}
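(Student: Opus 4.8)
The plan is to show that the quantity $\tilde K$ is invariant under multiplication by $4$ for even integers: if $n$ is even, then $\tilde K(4n)=\tilde K(n)$, where for a positive integer $m$ I write $\tilde K(m)$ for the least $K$ for which $m=a_1^2+a_2^2+a_3^2+a_4^2$ admits a solution with every nonzero $a_i\ge \sqrt m/K$. Taking the entries nonnegative as in Question \ref{fourlarge}, and writing $M(m)$ for the largest value that $\min\{a_i : a_i\neq 0\}$ attains as $(a_1,a_2,a_3,a_4)$ ranges over the four-square representations of $m$, one has $\tilde K(m)=\sqrt m/M(m)$ (this minimum is achieved, so $\tilde K$ is well defined). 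Once the invariance $\tilde K(4n)=\tilde K(n)$ is established, iterating it gives $\tilde K(4^k n)=\tilde K(n)=\tilde K$ for every $k\in\mathbb N$, since each $4^k n$ is again even; this produces the required infinite set $\{4^k n : k\in\mathbb N\}$.

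The heart of the argument is the following observation: \emph{if $n$ is even, then in every representation $4n=b_1^2+b_2^2+b_3^2+b_4^2$ all the $b_i$ are even.} To see this, I would first reduce modulo $4$: a square is $\equiv 0$ or $1\pmod 4$, and $4n\equiv 0\pmod 4$, so the number of odd $b_i$ must be $0$ or $4$. To exclude the value $4$, I would reduce modulo $8$: an odd square is $\equiv 1\pmod 8$, so four odd squares sum to $4\pmod 8$, which would force $4n\equiv 4\pmod 8$ and hence $n$ odd, contradicting the hypothesis. Therefore the number of odd $b_i$ is $0$, i.e.\ all $b_i$ are even.

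Consequently, writing $b_i=2c_i$ gives $n=c_1^2+c_2^2+c_3^2+c_4^2$, so the doubling map $(c_1,c_2,c_3,c_4)\mapsto(2c_1,2c_2,2c_3,2c_4)$ is a bijection between the four-square representations of $n$ and those of $4n$. This bijection multiplies every entry by $2$, hence it sends a representation whose smallest nonzero part equals $s$ to one whose smallest nonzero part equals $2s$; taking the maximum over all representations yields $M(4n)=2M(n)$, and therefore $\tilde K(4n)=\sqrt{4n}/M(4n)=2\sqrt n/\bigl(2M(n)\bigr)=\tilde K(n)$. This is exactly the invariance needed, and the iteration described in the first paragraph then completes the proof.

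The only genuinely arithmetic input is the mod $8$ step ruling out all-odd representations of $4n$, and I expect this to be the crux, since it is precisely here that the assumption that $n$ is even is used; everything else is a scaling argument that preserves the ratio $\sqrt m/(\text{smallest nonzero part})$ defining $\tilde K$. For odd $n$ the all-odd representations of $4n$ cannot be excluded in this way, and such representations may strictly lower $M(4n)$, so the invariance can fail — which is why the statement is restricted to even $n$.
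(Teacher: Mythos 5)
Your proposal is correct and follows essentially the same route as the paper: lift representations of $n$ to $4n$ by doubling, and descend from $4n$ to $n$ by halving, using the fact that any four-square representation of a multiple of $8$ has all entries even (a fact you usefully make explicit via the mod $4$ and mod $8$ reductions, which the paper only asserts). Packaging this as the exact invariance $\tilde K(4n)=\tilde K(n)$ and iterating over $4^{\alpha}n$ matches the paper's argument.
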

\begin{proof}
Consider the integer $4n$.
 
Clearly, if $a_1^2+a_2^2+a_3^2+a_4^2=n$ then $(2a_1)^2+(2a_2)^2+(2a_3)^2+(2a_4)^2=4n$ and $2a_i \ge \frac{\sqrt{4n}}{\tilde{K}}$, thus Question \ref{fourlarge} holds for $\tilde{K}$.
Suppose now that there exist $K' \le \tilde{K}$ and $b = (b_1,b_2,b_3,b_4) \in \mathcal{Z}_{4n}$ such that $b_i \ge \frac{\sqrt{4n}}{K'}=\frac{2\sqrt{n}}{K'}$ whenever $b_i \neq 0$. Since $b_1^2+b_2^2+b_3^2+b_4^2=n$ and $n$ is divisible by $8$, then every $b_i$ is even, and thus, considering $a_i=\frac{b_i}{2}$ for every $i=1,2,3,4$, we have $(a_1,a_2,a_3,a_4) \in \mathcal{Z}_n$ and $a_i \ge \frac{\sqrt{n}}{K'}$ whenever $a_i \neq 0$. However, $K' \le \tilde{K}$, thus leading to a contradiction. 

Therefore, $\tilde{K}$ is the minimum value of $K$ such that Question \ref{fourlarge} holds for $4n$. Since $4n$ is even, this implies that $\tilde{K}$ is the minimum value of $K$ such that Question \ref{fourlarge} holds for every integer of the form $4^\alpha n$, $\alpha \ge 0$, and thus our thesis is proved.
\end{proof} 

In light of this, we found the minimal integer constant $K$ that work for small values of $n$.
For this purpose, considering the set $\mathcal{Z}_n$ of integral points of $\mathcal{S}_n$, we define, for each point $a=(a_1,a_2,a_3,a_4) \in \mathcal{Z}_n$, $L_a=\min\{a_i \ | a_i \neq 0\}$. We can define a partial order $\le_L$ on $\mathcal{Z}_n$ such that $a \le_L b$ if and only if $L_a \le L_b$ for every $a,b \in \mathcal{Z}_n$. In order to get the minimal value of $K$ for a specific $n$, we need to find the maximal points of $(\mathcal{Z}_n,\le_L)$. 
We computed with GAP these maximal points for $n \le 2560000$. In Table \ref{tab1}, $I_K$ denotes the set of integers $n \le 2560000$ such that $K$ is the minimum value such that Question \ref{fourlarge} holds, and $S_K$ denotes the set of squarefree integers $n \le 2560000$ such that $K$ is the minimum value such that Question \ref{fourlarge} holds.
\begin{center}
\begin{table}[h]
    \begin{tabular}{| c || c | c | c |}
    \hline
    $K$ & $\# I_K$ & $\# S_K$ & $\max S_K$  \\ \hline \hline
    $1$ & $1600$ & $1$ & $1$  \\ \hline
    $2$ & $1996562$ & $1248201$ & $2559998$ \\ \hline
    $3$ & $561563$ & $308003$  & $2559999$ \\ \hline
    $4$ & $232$ & $59$ &  $1327$ \\ \hline
    $5$ & $21$ & $7$ & $151$ \\ \hline
    $6$ & $11$ & $3$ & $239$  \\ \hline
    $7$ & $9$ & $2$ & $46$ \\ \hline
    $8$ & $2$ & $2$ & $55$ \\ \hline 
    \end{tabular}
    \caption{}\label{tab1}
\end{table} 
\end{center}
From these computation it emerges that Question \ref{fourlarge} holds with $K=8$ for every $n \le 2560000$, and further considerations on this data suggest that $K=8$ might be a good choice for every integer $n$. Moreover, it seems that for the vast majority of integers $n$ computed, Question \ref{fourlarge} holds with $K \le 3$. However, since $46 \in I_7$, $30 \in I_6$, $78 \in I_5$ and $10 \in I_4$, Proposition \ref{even} assures that $I_4,I_5,I_6,I_7$ are all infinite sets; thus, surprisingly, even if we want to prove Question \ref{fourlarge} for large values of $n$ the constant $K$ required cannot be lower than $7$.

From Table \ref{tab1} it seems that for large squarefree integers the mininum constant $K$ required is either $2$ or $3$. Obviously $K$ cannot be lower than $3$, since if we consider squarefree integers $n$ such that $n \equiv 7 \pmod{8}$, thus for every $a=(a_1,a_2,a_3,a_4)$ such that $a_1^2+a_2^2+a_3^2+a_4^2=n$ we have $a_i \neq 0$ for every $i=1,2,3,4$ and thus the minimum $a_i$ cannot be greater than $\frac{\sqrt{n}}{2}$. Nevertheless, it might be interesting to see whether Question \ref{fourlarge} holds for squarefree integers with $K=3$.

Solving this Question would provide bounds for $F(\Gamma_n)$. Denoting with $F(n)$ the greatest positive integer that could not be represented as a sum of at most four squares of integers not lower than $n$, proving that Question \ref{fourlarge} holds for every integer with $K=8$ would imply that $F(\Gamma_n) \le F(n) \le 64n^2$. Because of the restriction on the number of squares involved in our sum, we cannot verify directly these bounds for $F(n)$. However, under the assumption that Question \ref{fourlarge} holds with $K=8$ for every positive integer, we can derive from the maximal vectors of positive integers up to $2560000=200^2 \cdot 64$ a table of the various $F(n)$ with $2 \le n \le 200$. 
Table \ref{tab2} compares these values of $F(n)$ with the relative values of $F(\Gamma_n)$.  
As Table \ref{tab2} shows, for $5 \le n \le 200$ it resulted that $F(n)=46 \cdot 4^{\lceil \log_2 n \rceil-1}$. Therefore, for these values it is easy to see that $F(n) \le 46n^2$, as Table \ref{tab2} and Figure \ref{fig} show. 
%\begin{center}
%\begin{table}[h]\label{tab2}
%    \begin{tabular}{| c || c | c |}
%    \hline
%    $n$ & $F(\Gamma_n)$ & $F(n)$   \\ \hline \hline
%    $2$ & $23$ & $55$  \\ \hline
%    $3$ & $87$ & $184$ \\ \hline
%    $4$ & $119$ & $239$  \\ \hline
%    $5$ & $201$ & $736$  \\ \hline
%    $6$ & $312$ & $736$  \\ \hline
%    $7$ & $376$ & $736$  \\ \hline
%    $8$ & $455$ & $736$  \\ \hline
%    $9$ & $616$ & $2944$  \\ \hline
%    $10$ & $760$ & $2944$   \\ \hline
%%    $16$ & $1719$ & $2944$   \\ \hline
%%    $17$ & $1959$ & $11776$   \\ \hline
%    $20$ & $2764$ & $11776$  \\ \hline
%    $30$ & $5523$ & $11776$  \\ \hline
%%    $32$ & $6112$ & $11776$   \\ \hline
%%    $33$ & $6624$ & $47104$   \\ \hline
%    $40$ & $9856$ & $47104$  \\ \hline
%    $50$ & $15232$ & $47104$   \\ \hline
%    $60$ & $21408$ & $47104$  \\ \hline
%%    $64$ & $23936$ & $47104$   \\ \hline
%%    $65$ & $24192$ & $188416$   \\ \hline
%    $70$ & $28192$ & $188416$  \\ \hline
%    $80$ & $36448$ & $188416$  \\ \hline
%    $90$ & $46048$ & $188416$  \\ \hline
%    $100$ & $57408$ & $188416$   \\ \hline
%    $125$ & $84992$ & $188416$  \\ \hline
%%    $128$ & $93552$ & $188416$  \\ \hline
%%    $129$ & $93552$ & $753664$  \\ \hline
%    $150$ & $122880$ & $753664$  \\ \hline
%    
%    
%    \end{tabular}
%    \caption{}
%\end{table} 
%\end{center}
\begin{figure}[htp] 
\centering 
\hfill 
\begin{minipage}[b]{.3\columnwidth} 
  \centering 
      \begin{tabular}{| c || c | c |}
      \hline
      $n$ & $F(\Gamma_n)$ & $F(n)$   \\ \hline \hline
      $2$ & $23$ & $55$  \\ \hline
      $3$ & $87$ & $184$ \\ \hline
      $4$ & $119$ & $239$  \\ \hline
      $5$ & $201$ & $736$  \\ \hline
      $6$ & $312$ & $736$  \\ \hline
      $7$ & $376$ & $736$  \\ \hline
      $8$ & $455$ & $736$  \\ \hline
      $9$ & $616$ & $2944$  \\ \hline
      $10$ & $760$ & $2944$   \\ \hline
  %    $16$ & $1719$ & $2944$   \\ \hline
  %    $17$ & $1959$ & $11776$   \\ \hline
      $20$ & $2764$ & $11776$  \\ \hline
      $30$ & $5523$ & $11776$  \\ \hline
  %    $32$ & $6112$ & $11776$   \\ \hline
  %    $33$ & $6624$ & $47104$   \\ \hline
      $40$ & $9856$ & $47104$  \\ \hline
      $50$ & $15232$ & $47104$   \\ \hline
      $60$ & $21408$ & $47104$  \\ \hline
  %    $64$ & $23936$ & $47104$   \\ \hline
  %    $65$ & $24192$ & $188416$   \\ \hline
      $70$ & $28192$ & $188416$  \\ \hline
      $80$ & $36448$ & $188416$  \\ \hline
      $90$ & $46048$ & $188416$  \\ \hline
      $100$ & $57408$ & $188416$   \\ \hline
      $125$ & $84992$ & $188416$  \\ \hline
  %    $128$ & $93552$ & $188416$  \\ \hline
  %    $129$ & $93552$ & $753664$  \\ \hline
      $150$ & $122880$ & $753664$  \\ \hline
      $175$ & $164864$ & $753664$  \\ \hline
      $200$ & $215040$ & $753664$  \\ \hline

      \end{tabular}
      \captionof{table}{}\label{tab2}   
\end{minipage}\hfill
\begin{minipage}[b]{.5\columnwidth} 
  \centering 
  \includegraphics[scale=.6]{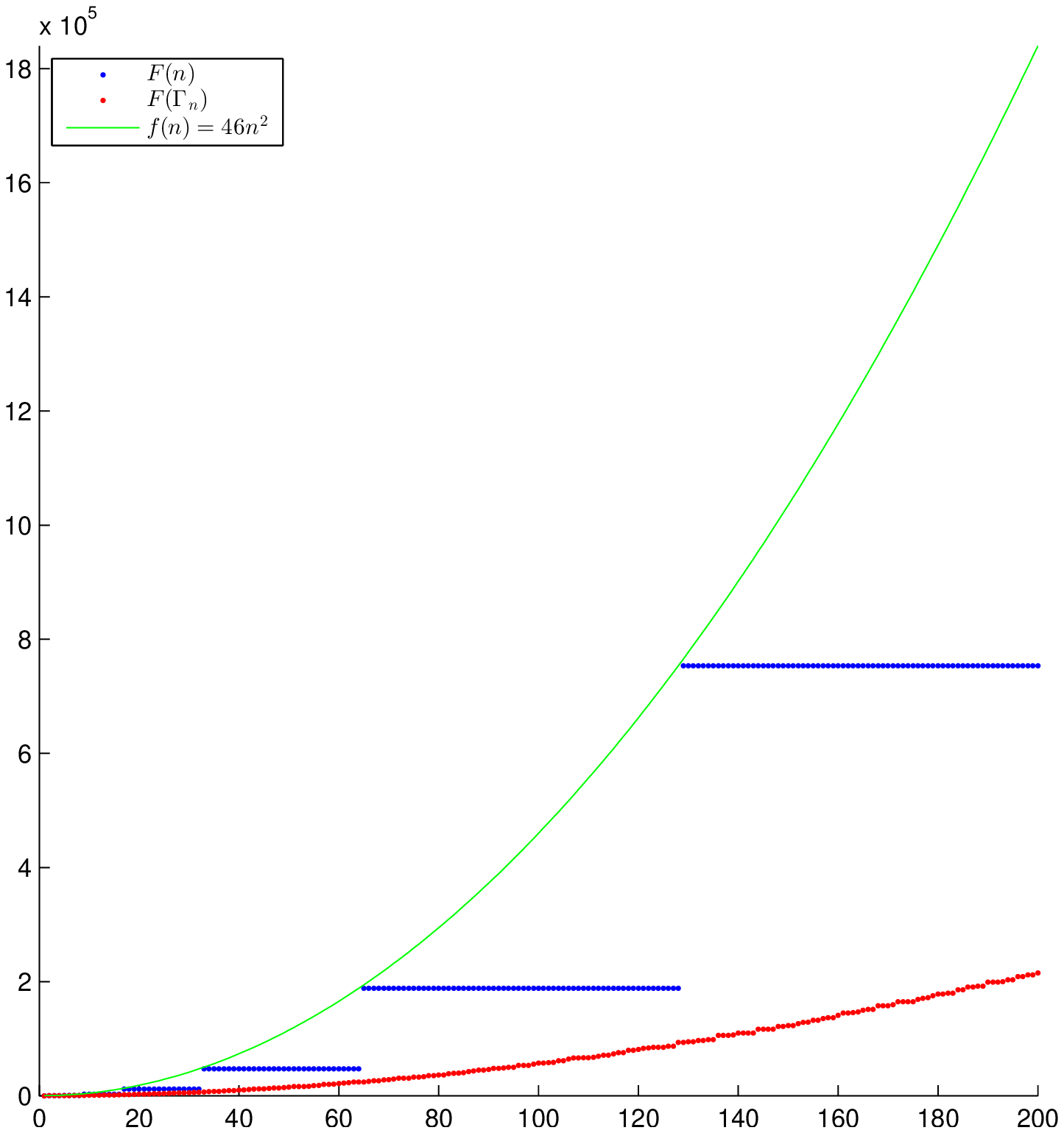} 
  \caption{}\label{fig} 
\end{minipage} 
\hspace*{\fill} 
\end{figure}
\section*{Acknowledgements}
I would like to thank Alessio Sammartano for introducing me to this problem, and for his helpful comments and suggestions on this work. I would also like to thank the referee for his useful remarks on Lemma \ref{point} and Section 2. 

\end{document}